\newtheorem{theorem}{Theorem}[section]
\newtheorem{lemma}[theorem]{Lemma}
\newtheorem{definition}[theorem]{Definition}
\newcommand{\beq} {\begin{equation}}
\newcommand{\eeq} {\end{equation}}
\begin{document}
\title[Non-homogeneity of Remainders]{On Cardinality of Non Isomorphic Intermediate Rings of $C(X)$}
\author{Bedanta Bose}
\address{Swami Niswambalananda Girls' College\\ 115, B.P.M.B. Sarani, Bhadrakali\\ Hooghly - 712232\\ India}
\email{ana\_bedanta@yahoo.com; anabedanta@gmail.com}

\subjclass[2010]{54D35,46E25}
\begin{abstract}
Let $\sum (X)$ be the collection of subalgebras of $C(X)$ containing $C^{*}(X)$, where $X$ is a Tychonoff space. For any $A(X)\in \sum(X)$ there is associated a subset $\upsilon_{A}(X)$ of $\beta X$ which is an $A$-analogue of the Hewitt real compactification $\upsilon X$ of $X$. For any $A(X)\in \sum(X)$, let $[A(X)]$ be the class of all $B(X)\in \sum(X)$ such that $\upsilon_{A}(X)=\upsilon_{B}(X)$. We have shown that for first countable  non compact real compact space $X$, $[A(X)]$ contains at least $2^{c}$ many different subalgebras no two of which are isomorphic.
\end{abstract}

\maketitle
\section{Introduction}
For a completely regular Hausdroff space $X$, let $C(X)$ and $C^{*}(X)$ denote the rings of all real valued and bounded real valued continuous functions on $X$ respectively. We denote  `intermediate ring' by $A(X)$ which mean that $C^{*}(X) \subseteq A(X) \subseteq C(X)$. Let $\sum (X)$ denote the class of all intermediate rings.

As we dig into the literature the study of intermediate rings started when D.Plank in 1969 \cite{plank} established a result that the structure space of any intermediate ring is homeomorphic to the Stone-Cech compactification of $X$, i.e., $\beta X$. The proof is quite different and complicated in compare with the conventional method of showing the same result for $C(X)$ only. In 1987 Redlin and Watson \cite{ls} proved the same result but using a new technique. They have created a new operator, called $\mathcal{Z}_{A}$ for an intermediate ring $A(X)$, which identify with one to one basis the maximal ideals of $A(X)$ with $z$-ultrafilters of $X$. They also established Gelfand-Kolmogorov like characterization for maximal ideals using the points of $\beta X$. Following their notation we denote maximal ideals of $A(X)$ by $M_{A}^{p}$ for each $p\in \beta X$.


For $A(X)\in \sum(X)$ we associate a subspace $\upsilon_{A}(X)\subseteq \beta X$ which is an $A$-analogue of the Hewitt real compactification $\upsilon X$ of $X$, i.e., the collection of all points  $p\in \beta X$ for which $A(X)/M_{A}^{p}$ is isomorphic to $\mathbb{R}$ with the usual subspace topology of $\beta X$. We define an equivalence relation $A(X) \sim B(X)$  if and only if $\upsilon_{A}(X)$ is homeomorphic to $\upsilon_{B}(X)$ where $A(X)$, $B(X)\in \sum(X)$. We denote the equivalence class for $A(X)\in\sum(X)$ by $[A(X)]$. The cardinality of a particular class $[A(X)]$ and algebraic inter relation between any two rings from a same class is merely a question. Redlin and Watson gave an example in \cite{ls} of an intermediate class where at least two rings are non isomorphic. We incorporate this example in verbatim as follows: let $H(\mathbb{N})$ be the the algebra of sequences which occur as the coefficients of the Taylor series representation of functions holomorphic on
the open unit disc. Then $\mathbb{N}$ is both $H$-compact (see \cite{Br}) and $C$-compact, but $H(\mathbb{N})$ is obviously not isomorphic to $C(\mathbb{N})$. This particular observation is the principal motivation in this article to search for cardinality of non isomorphic rings in a particular class $[A(X)]$.


Research along this line by Redlin, Watson, Buyun, Pannman, Sack, etc developed machineries to which one can deal with problems. In this context mention may be made of \cite{hs}, \cite{ls}, \cite{lsw}, \cite{JSack} which are few of their works.
 
In the second section of this article we have established a result [Theorem \ref{proper copy}] that tells that a real closed $\eta_{\alpha}$ field of power $\aleph_{\alpha}$ contains a proper copy of itself, which provides a way to identify subring $B_{p}^{\mathfrak{F}}(X)\in \sum(X)$ of the class $[A(X)]$ where $\mathfrak{F}$ is a real closed $\eta_{1}$-field with transcendence base at least $c$ and $p\in \beta X\setminus \upsilon_{A^{\nu}}(X)$. As this identification $B_{p}^{\mathfrak{F}}(X)$ of subrings depends upon the point $p\in \beta X\setminus \upsilon_{A^{\nu}}(X)$ it can be conclude that there are plenty of subrings, in fact, at least $2^c$ many in the class $[A(X)]$ [Theorem \ref{distinct rings}]. In section 3 we show that the existence of isomorphic subrings in the class $[A(X)]$ is equivalent to the existence of homeomorphism from $\beta X$ onto itself which therefore relates our main focus to homogeneity of $\beta X$ [Theorem \ref{characterization},Theorem \ref{characterization2}]. Finally we are able to show that there exists at least $2^c$ many non-isomorphic different subrings in the class $[A(X)]$ if $X$ is first countable, locally compact, non-pseudocompact and realcompact which contains a $C$-embedded copy of $\mathbb{N}$.

Some results and notations of intermediate rings have been used in this paper. For clarity purpose one can find it in \cite{hs}, \cite{ls}, \cite{lsw}, \cite{JSack}.

\section{Some Basic Results and Discussion}

Recall $\upsilon_A X=\{p\in \beta X~|~ A(X)/ M_{A}^{p}~ \mbox{ isomorphic to}~ \mathbb{R}\}$. Let for each $A(X)\in \sum(X)$,  $A^{\nu}(X)=\{f\in C(X)~|~f^{*}(\upsilon_{A}X)\subseteq \mathbb{R}\}$ where $f^*$ is the unique stone-extension of $f$ from $\beta X$ to the one-point compactification of $\mathbb{R}$, i.e., $\mathbb{R}^{*}$. It is obvious from Theorem 3.5 of \cite{hs} that $A(X)\subseteq A^{\nu}(X)$ and also $C^*(X)\subseteq A^{\nu}(X)$ which therefore imply  that $A^{\nu}(X)$ is an intermediate subalgebra of $C(X)$. Also it is clear from the definition of $A^{\nu}(X)$ that  $\upsilon_{A} X= \upsilon_{A^{\nu}} X$ and therefore $A(X)\subseteq A^{\nu}(X)$ for all $A(X)\in [A(X)]$. In a nutshell $A^{\nu}(X)$ is the largest element under set inclusion in the class $[A(X)]$ and also $A^{\nu}(X)\cong C(\upsilon_{A}(X))$ \cite{lsw}, i.e., $A^{\nu}(X)$ is a $C$-ring and hence we can directly use Theorem 13.2, Theorem 13.4 of \cite{lm} and conclude the following result.

\begin{theorem}\label{real closed}
Every hyper-real field of the form $A^{\nu}(X)/M_{A^{\nu}}^{p}$ for $p\in \beta X\setminus \upsilon_{A^{\nu}}(X)$ is a real closed $\eta_{1}$-field with transcendence base at least $c$.
\end{theorem}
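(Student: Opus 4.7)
The author has essentially signposted the argument: since $A^{\nu}(X)\cong C(\upsilon_A X)$ is a $C$-ring, the residue class field in question is really a residue class field of $C(Y)$ for $Y=\upsilon_A X$, and one quotes Gillman--Jerison. My plan is therefore to unwind this reduction carefully and then invoke the cited theorems, with the transcendence degree being the one point that needs a small extra remark.

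First I would fix the isomorphism $\Phi: A^{\nu}(X)\to C(\upsilon_A X)$ which, by the Redlin--Watson--Subhash result quoted in the excerpt, is the restriction map $f\mapsto f^{*}|_{\upsilon_A X}$. Under $\Phi$, the maximal ideal $M_{A^{\nu}}^{p}$ corresponds to a maximal ideal of $C(\upsilon_A X)$; since the Stone--\v Cech compactification of $\upsilon_A X$ is $\beta X$, this ideal is precisely $M^{p}$ in the usual Gelfand--Kolmogorov sense, and it induces a ring isomorphism
\[
A^{\nu}(X)/M_{A^{\nu}}^{p}\;\cong\;C(\upsilon_A X)/M^{p}.
\]
Next I would check that this ideal is hyper-real, i.e.\ non-real: by definition of $\upsilon_{A^{\nu}}(X)$ the hypothesis $p\in\beta X\setminus \upsilon_{A^{\nu}}(X)$ says exactly that the residue class field is not $\mathbb{R}$, so $M^{p}$ is a hyper-real maximal ideal of $C(\upsilon_A X)$.

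At this stage, Theorems 13.2 and 13.4 of Gillman--Jerison apply verbatim to the quotient $C(\upsilon_A X)/M^{p}$ and deliver that it is a real closed $\eta_{1}$-field. For the transcendence base, I would argue as follows: every $\eta_{1}$-field has cardinality at least $\aleph_{1}$, and in fact the standard proof in Gillman--Jerison shows $|C(\upsilon_A X)/M^{p}|\geq \mathfrak{c}$ (one exhibits $\mathfrak{c}$ pairwise inequivalent functions modulo $M^{p}$, for instance by looking at a sequence $x_n\to p$ in $\beta X$ and using characteristic-type combinations). Since the prime subfield $\mathbb{Q}$ is countable and an algebraic extension of a field $F$ has the same cardinality as $F$ when $|F|\geq\aleph_{0}$, any transcendence base of $A^{\nu}(X)/M_{A^{\nu}}^{p}$ over $\mathbb{Q}$ must have cardinality at least $\mathfrak{c}$.

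The only genuinely non-formal step is the cardinality lower bound, since the $\eta_{1}$ and real closed conclusions are immediate citations; but that lower bound is already contained in the Gillman--Jerison proof of 13.4, so no new work is required. Hence the whole argument is a short reduction followed by an appeal to \cite{lm}.
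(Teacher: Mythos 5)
Your proposal is correct and follows exactly the route the paper intends: the paper offers no written proof beyond observing that $A^{\nu}(X)\cong C(\upsilon_{A}X)$ is a $C$-ring and citing Theorems 13.2 and 13.4 of Gillman--Jerison, which is precisely the reduction you carry out. Your version merely makes explicit the identification of $M_{A^{\nu}}^{p}$ with a hyper-real maximal ideal of $C(\upsilon_{A}X)$ (using $\beta(\upsilon_{A}X)=\beta X$) and the cardinality argument for the transcendence base, both of which are consistent with the cited results.
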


We know that real closed $\eta_{\alpha}$-fields of cardinality $\aleph_{\alpha}$ are isomorphic \cite{PLM}. This fact takes a crucial role to prove the following result.

\begin{theorem} \label{proper copy} Let $\alpha>0$ be any ordinal and let $\mathfrak{F}$ be a real closed $\eta_{\alpha}$-field of power $\aleph_{\alpha}$. Then $\mathfrak{F}$ contains a proper copy of itself.
\end{theorem}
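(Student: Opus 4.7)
The plan is to construct, inside $\mathfrak{F}$, a proper real closed $\eta_\alpha$-subfield $\mathfrak{K} \subsetneq \mathfrak{F}$ of cardinality $\aleph_\alpha$. Once produced, such a $\mathfrak{K}$ is isomorphic to $\mathfrak{F}$ by the uniqueness theorem cited immediately before the statement, and since the inclusion is proper this exhibits $\mathfrak{K}$ as a proper copy of $\mathfrak{F}$ sitting inside $\mathfrak{F}$.

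Since $|\mathfrak{F}| = \aleph_\alpha \ge \aleph_1$ and the real algebraic numbers form a countable subfield, $\mathfrak{F}$ has transcendence degree $\aleph_\alpha$ over $\mathbb{Q}$. I pick any $t \in \mathfrak{F}$ transcendental over $\mathbb{Q}$ and build $\mathfrak{K}$ so that $t \notin \mathfrak{K}$. The construction is by transfinite recursion of length $\aleph_\alpha$, producing an increasing chain $(K_\beta)_{\beta < \aleph_\alpha}$ of real closed subfields of $\mathfrak{F}$, each of cardinality less than $\aleph_\alpha$ and each avoiding $t$, with $K_\lambda = \bigcup_{\beta < \lambda} K_\beta$ at limit stages. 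A standard bookkeeping enumerates pairs $(A, B)$ with $A, B$ small subsets of the forming union and $A < B$; at the successor stage handling $(A, B)$ one chooses a witness $x \in \mathfrak{F}$ with $A < x < B$ and sets $K_{\beta+1} = K_\beta(x)^{rc}$, the real closure computed inside $\mathfrak{F}$. Since $|K_\beta(x)^{rc}| = |K_\beta|$, cardinality stays under control, and the final union $\mathfrak{K}$ is a real closed subfield of $\mathfrak{F}$ of cardinality $\aleph_\alpha$ that satisfies $\eta_\alpha$ by the bookkeeping and omits $t$.

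The main technical point is the sub-lemma that such a witness $x$ can always be chosen so that $t \notin K_\beta(x)^{rc}$. Since $K_\beta$ is real closed and $t \notin K_\beta$, the element $t$ is transcendental over $K_\beta$; set $L = K_\beta(t)^{rc}$ inside $\mathfrak{F}$, so that $|L| = |K_\beta| < \aleph_\alpha$. One verifies that $t \in K_\beta(x)^{rc}$ holds precisely when $x$ is transcendental over $K_\beta$ and algebraic over $K_\beta(t)$, which forces $x \in L \setminus K_\beta$. On the other hand, iterating the $\eta_\alpha$ property of $\mathfrak{F}$ between $A \cup S$ and $B$ for a growing small set $S \subseteq (A, B)$ shows that the interval $(A, B)$ already contains at least $\aleph_\alpha$ elements of $\mathfrak{F}$, so a good witness $x \in (A, B) \setminus L$ exists. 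I expect the only real delicacy beyond this sub-lemma to be arranging the bookkeeping so that every eventual cut of $\mathfrak{K}$ of size less than $\aleph_\alpha$ is handled at some stage of the recursion, which is routine under the same cardinal arithmetic assumption (essentially $\aleph_\alpha^{<\aleph_\alpha} = \aleph_\alpha$) that the cited uniqueness theorem already requires.
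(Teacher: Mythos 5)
Your proposal is correct in outline, but it takes a genuinely different route from the paper. The paper's proof is much shorter: it invokes Lemma 13.11 of Gillman--Jerison to get a \emph{dense} transcendence base $A$ of $\mathfrak{F}$ over $\mathbb{Q}$, deletes a single element $a$, and takes $\mathfrak{F}'$ to be the (relative) real closure of $\mathbb{Q}(A\setminus\{a\})$; then $\mathfrak{F}'$ is proper (else $A\setminus\{a\}$ would be a transcendence base), still dense in $\mathfrak{F}$, hence still an $\eta_\alpha$-set by Lemma 1.3 of Erd\H{o}s--Gillman--Henriksen, and the uniqueness theorem finishes. You instead rebuild an $\eta_\alpha$ real closed subfield from scratch by transfinite cut-filling while omitting a fixed transcendental $t$; your exchange-property sub-lemma (bad witnesses lie in $K_\beta(t)^{rc}\setminus K_\beta$, a set of size $<\aleph_\alpha$, while each cut contains $\aleph_\alpha$ elements of $\mathfrak{F}$) is sound, as is the closing appeal to the same uniqueness theorem. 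What the paper's route buys is brevity and the avoidance of all bookkeeping: density transfers the $\eta_\alpha$ property for free. What your route buys is self-containedness and transparency about the set-theoretic hypotheses ($\aleph_\alpha$ regular and $\aleph_\alpha^{<\aleph_\alpha}=\aleph_\alpha$) that are in fact also needed, though only tacitly acknowledged, in the paper's appeal to the isomorphism theorem (stated there only under CH, i.e.\ for $\alpha=1$, despite the theorem being asserted for all $\alpha>0$). Both arguments share the feature that the ``proper copy'' is located only up to the nonconstructive isomorphism supplied by the Erd\H{o}s--Gillman--Henriksen theorem.
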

\begin{proof} By Lemma 13.11 \cite{lm} $\mathfrak{F}$ has a dense transcendense base over $\mathbb{Q}$ and let it be $A$. Then $\mathfrak{F}=\mathfrak{R}(\mathbb{Q}(A))$. Let $a\in A$ and $A'=A-\{a\}$. Then $A'$ is also dense in $\mathfrak{F}$ and $|A'|=|A|$. Now let $\mathfrak{F}'=\mathfrak{R}(\mathbb{Q}(A'))$. Obviously, $\mathfrak{F}'\subset \mathfrak{F}$. This inclusion is proper, otherwise $A'$ will be the maximal set of independent transcendentals and hence a base which contradicts our assumption that $A$ is a base. Again, $A'\subseteq \mathfrak{F}'$ imply that $\mathfrak{F}'$ is dense in $\mathfrak{F}$. Since every dense subset of a $\eta_{\alpha}$-set is a $\eta_{\alpha}$-set [lemma 1.3, \cite{PLM}], therefore, $\mathfrak{F}'$ is a $\eta_{\alpha}$-set of power $\aleph_{\alpha}$. Hence $\mathfrak{F}'$ is a real closed $\eta_{\alpha}$-field. If we assume continuum hypothesis then any two real closed field that are $\eta_{\alpha}$-set of power $\aleph_{\alpha}$ are isomorphic and hence $\mathfrak{F}$ and $\mathfrak{F}'$ are isomorphic. Hence $\mathfrak{F}$ contains a proper copy of itself.
\end{proof}

Theorem \ref{real closed} and Theorem \ref{proper copy} together ensure the fact that $A^{\nu}(X)/ M_{A^{\nu}}^{p}$ contains a proper copy of itself. Let us take $\mathfrak{F}$ be the proper copy of $A^{\nu}(X)/ M_{A^{\nu}}^{p}$ into itself and $\theta$ be the canonical map from $A^{\nu}(X)$ to the hyper-real field $A^{\nu}(X)/ M_{A^{\nu}}^{p}$, i.e., $\theta(f)=M_{A^{\nu}}^{p}(f)$ for $f\in A^{\nu}(X)$. Then $\theta^{-1}(\mathfrak{F})$ is a proper subring of $A^{\nu}(X)$ and we denote this subring by $B_{p}^{\mathfrak{F}}(X)$. Since $\mathfrak{F}$ contains a copy of $\mathbb{R}$ it follows that $C^{*}(X)\subseteq B_{p}^{\mathfrak{F}}(X)$ and hence $B_{p}^{\mathfrak{F}}(X)\in \sum(X)$. Also note that $\theta^{-1}(0)=M_{A}^{p}\subseteq B_{p}^{\mathfrak{F}}(X)$.

\begin{theorem} \label{Biswajit intermediate} Let $A(X)\in \sum(X)$ and $p,q\in \beta X$ such that $p\in \beta X-\upsilon_{A}X$ then there exists $f\in A(X)$ such that $f\in M_{A}^{q}$ and $f^{*}(p)=\infty$. 
\end{theorem}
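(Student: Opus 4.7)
The plan is to produce $f$ as a product $f=gh$, where $g\in A(X)$ is an element with $g^*(p)=\infty$ (available since $p\notin\upsilon_AX$) and $h\in C^*(X)\subseteq A(X)$ is a Urysohn bump vanishing at $q$ and taking the value $1$ at $p$. Note that the conclusion implicitly requires $p\neq q$: if $p=q$, then $f\in M_A^p$ forces $[f]=0$ in the residue field $A(X)/M_A^p$, while $f^*(p)=\infty$ would make $[f]$ an infinite element of that field, a contradiction.

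The first step is to extract $g$. Because $p\in\beta X\setminus\upsilon_AX$, the residue field $A(X)/M_A^p$ is not isomorphic to $\mathbb R$; the standard Gelfand--Kolmogorov-style correspondence between residues and Stone extensions (adapted to intermediate rings as in \cite{ls} and invoked tacitly in the paragraph preceding Theorem~\ref{real closed}) then yields some $g\in A(X)$ with $g^*(p)=\infty$ in $\mathbb R^*$. This extraction is the one genuinely nontrivial input and is where the main obstacle lies. Next, using $p\neq q$ and normality of the compact Hausdorff space $\beta X$, Urysohn's lemma supplies $\phi\in C(\beta X,[0,1])$ with $\phi(q)=0$ and $\phi(p)=1$. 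Setting $h:=\phi|_X\in C^*(X)\subseteq A(X)$, the Stone extension of $h$ equals $\phi$, so $h^*(q)=0$; under the Redlin--Watson correspondence $M^{*q}=M_A^q\cap C^*(X)$, this gives $h\in M_A^q$.

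Finally, set $f:=gh\in A(X)$. Ideal absorption yields $f\in M_A^q$ at once, since $h\in M_A^q$. To verify $f^*(p)=\infty$, take any net $(x_\lambda)$ in $X$ with $x_\lambda\to p$ in $\beta X$; then $g(x_\lambda)\to\infty$ in $\mathbb R^*$ while $h(x_\lambda)\to 1$ in $\mathbb R$, so $f(x_\lambda)=g(x_\lambda)h(x_\lambda)\to\infty$, and $f^*(p)=\infty$ follows by continuity of the Stone extension into $\mathbb R^*$.
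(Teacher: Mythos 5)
Your overall architecture (an element of $A(X)$ that blows up at $p$, multiplied by a bounded bump that is $1$ at $p$ and lies in $M_{A}^{q}$) is the same as the paper's, but two of your steps do not hold up, and one of them is a genuine error rather than an omitted detail. The critical one is your claim that $h^{*}(q)=0$ already puts $h$ in $M_{A}^{q}$ ``under the correspondence $M^{*q}=M_{A}^{q}\cap C^{*}(X)$.'' That identity is false in general, already for $A(X)=C(X)$: with $X=\mathbb{N}$ and $h(n)=1/n$ one has $h^{\beta}(q)=0$ for every $q\in\beta\mathbb{N}\setminus\mathbb{N}$, yet $h$ is a unit of $C(\mathbb{N})$ and so lies in no maximal ideal at all. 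Membership in $M_{A}^{q}$ is governed by the Redlin--Watson/Byun--Watson characterization $M_{A}^{q}=\{f\in A(X): q\in\bigcap\{\operatorname{cl}_{\beta X}E : E\in\mathcal{Z}_{A}(f)\}\}$, not by the vanishing of the Stone extension at $q$. The paper avoids this trap by choosing an open $V$ in $\beta X$ with $q\in V\subseteq\operatorname{cl}_{\beta X}V\subseteq\beta X\setminus\{p\}$ and taking $h$ with $h^{\beta}\equiv 0$ on $\operatorname{cl}_{\beta X}V$; then $h$ vanishes identically on $V\cap X$, so $q\in\operatorname{cl}_{\beta X}(V\cap X)\subseteq\operatorname{cl}_{\beta X}Z(h)$, which does force $h\in M_{A}^{q}$. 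Your Urysohn function, which merely vanishes at the single point $q$, need not satisfy this, so $f=gh\in M_{A}^{q}$ is not established.

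The second issue is your first step: you acknowledge that extracting $g\in A(X)$ with $g^{*}(p)=\infty$ from the hypothesis $p\notin\upsilon_{A}X$ is ``the one genuinely nontrivial input,'' and then you import it as a black box. That extraction is essentially the content of the theorem (minus the $M_{A}^{q}$ condition), and the paper does not assume it; instead it works inside the residue field: since $A(X)/M_{A}^{p}$ properly extends $\mathbb{R}$ it contains a nonzero infinitesimal $M_{A}^{p}(g)$, whose inverse $M_{A}^{p}(\xi)$ is infinitely large, and the conclusion $f^{\beta}(p)=\infty$ for $f=h\xi$ is then deduced from $(fg)^{\beta}(p)=1$ together with $g^{\beta}(p)=0$. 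This route only uses the easy direction of the dictionary between residue classes and Stone extensions (infinitesimal residue implies Stone value $0$), whereas your step needs the harder converse direction. Both defects are repairable --- replace the point-vanishing bump by one vanishing on a trace of a $\beta X$-neighbourhood of $q$, and either prove or properly locate a reference for the existence of $g$ with $g^{*}(p)=\infty$ --- but as written the proof is incomplete.
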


\begin{proof}
Since $p\in \beta X-\upsilon_{A}X$, there exist $g\in A(X)$ such that $|M_{A}^{p}(g)|$ is infinitely small and $M_{A}^{p}(g)\neq 0$ in the field $A(X)/M_{A}^{p}$ and hence there exist $\xi \in A(X)$ such that $M_{A}^{p}(g)M_{A}^{p}(\xi)=1$. Since $p\neq q$, there exist an open set $V$ in $\beta X$ such that $q\in V\subseteq \mbox{cl}_{\beta X}V\subseteq \beta X-\{p\}$ and by complete regularity there exists $h\in C^{*}(X)$ such that $h^{\beta}(\mbox{cl}_{\beta X}V)=0$ and $h^{\beta}(p)=1$. Let $f=h\xi\in A(X)$. Then $|M_{A}^{p}(fg-1)|=|M_{A}^{p}(h-1)|$. Again $h^{\beta}(p)=1$ shows that $|M_{A}^{p}(h-1)|$ is either infinitely small or zero, i.e., $(gf)^{\beta}(p)=1$. Since $g^{\beta}(p)=0$, we can conclude that $f^{\beta}(p)=\infty$. Again $h^{\beta}(\mbox{cl}_{\beta X}V)=0$ and $q\in V$ therefore $h\in M_{A}^{q}$. 
\end{proof}

It is quite clear from  Theorem \ref{Biswajit intermediate} that if $f\in M_{A^{\nu}}^{q}$ then $|M_{A^{\nu}}^{q}(f)|=0$ and hence $f\in B_{q}^{\mathfrak{F}}(X)$. Again $f^{\beta}(p)=\infty$, i.e., $p\notin \upsilon_{B_{q}^{\mathfrak{F}}}(X)$ for all $p\in \beta X-\upsilon_{A^{\nu}}X$ and consequently $\upsilon_{B_{q}^{\mathfrak{F}}}(X)\subseteq \upsilon_{A^{\nu}}(X)$. Also $\upsilon_{A^{\nu}}(X)\subseteq\upsilon_{B_{q}^{\mathfrak{F}}}(X)$ which follows from the fact that $B_{q}^{\mathfrak{F}}(X)\subseteq A^{\nu}(X)$ and therefore we can conclude that $B_{q}^{\mathfrak{F}}(X)\in [A(X)]$.

Now one can conclude that for each point $p\in \beta X\setminus \upsilon_{A}(X)$ there is a subring $B_{p}^{\mathfrak{F}}(X)$ which belongs to the class $[A(X)]$. Again for locally compact, non-compact but realcompact space $X$, $\beta X\setminus X$ contains at least $2^{c}$ many elements [\cite{lm},\S Corollary 9.12] that combining with the previous fact produce the following theorem.

\begin{theorem}\label{distinct rings}
For locally compact, non-compact but realcompact space $X$, each class $[A(X)]$ contains at least $2^{c}$ many distinct subrings.
\end{theorem}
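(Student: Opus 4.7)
The plan is to realize $2^{c}$ many distinct rings inside $[A(X)]$ by parametrizing the construction $p \mapsto B_p^{\mathfrak{F}}(X)$ developed just before the theorem over a set of points $p \in \beta X \setminus \upsilon_{A^{\nu}}(X)$ of the required size.

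I would first verify the cardinality of the parameter set. Since $X$ is realcompact and non-compact, $\upsilon X = X \subsetneq \beta X$; since $X$ is also locally compact and non-compact, Corollary 9.12 of \cite{lm} yields $|\beta X \setminus X| \geq 2^{c}$. As $X \subseteq \upsilon_{A^{\nu}}(X)$, this gives $|\beta X \setminus \upsilon_{A^{\nu}}(X)| \geq 2^{c}$ in every non-trivial case, in particular whenever $\upsilon_{A^{\nu}}(X) = X$. For each such $p$, Theorem \ref{real closed} identifies $A^{\nu}(X)/M_{A^{\nu}}^{p}$ as a real closed $\eta_{1}$-field of cardinality at least $c$; Theorem \ref{proper copy} supplies a proper isomorphic copy $\mathfrak{F} \hookrightarrow A^{\nu}(X)/M_{A^{\nu}}^{p}$; and $B_p^{\mathfrak{F}}(X) = \theta_p^{-1}(\mathfrak{F})$ lies in $[A(X)]$ by the paragraph following Theorem \ref{Biswajit intermediate}.

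The heart of the argument, and the step where I expect the real difficulty, is distinctness of the family $\{B_p^{\mathfrak{F}}(X)\}$. Given $p \neq q$ in $\beta X \setminus \upsilon_{A^{\nu}}(X)$, I would invoke Theorem \ref{Biswajit intermediate} applied to $A^{\nu}$ to produce $f \in A^{\nu}(X)$ with $f \in M_{A^{\nu}}^{q}$ and $f^{*}(p) = \infty$; the first condition places $f$ in $B_q^{\mathfrak{F}}(X)$ automatically (since $M_{A^{\nu}}^{q} \subseteq B_q^{\mathfrak{F}}(X)$), while the second makes $\theta_p(f)$ an infinitely large element of the hyper-real field $A^{\nu}(X)/M_{A^{\nu}}^{p}$. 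Separation then reduces to choosing the proper copy $\mathfrak{F} \hookrightarrow A^{\nu}(X)/M_{A^{\nu}}^{p}$ so that $\theta_p(f) \notin \mathfrak{F}$, for then $f \notin B_p^{\mathfrak{F}}(X)$ and the two rings differ.

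Coordinating these choices simultaneously across all pairs $(p,q)$ is the delicate part, and the main obstacle of the proof. I would attempt a transfinite recursion, exploiting the flexibility in the proof of Theorem \ref{proper copy}: one is free to designate which element of a dense transcendence base is deleted, and a minor variant permits deletion of an entire proper subset so long as what remains is still dense and of full cardinality. Feeding $\theta_p(f)$ into an enriched transcendence base at $p$ and then excising it (together with previously identified witnesses at $p$) produces a proper copy $\mathfrak{F}$ avoiding $\theta_p(f)$; carrying this bookkeeping through the $\leq 2^{c}$ many pairs using the cardinal identity $(2^{c})^{2} = 2^{c}$ delivers the required $2^{c}$ distinct subrings. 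This coordinated construction, rather than the pointwise existence of any single $B_p^{\mathfrak{F}}(X)$, is where I expect the real work to lie.
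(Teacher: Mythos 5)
Your parametrization $p\mapsto B_{p}^{\mathfrak{F}}(X)$ over $\beta X\setminus\upsilon_{A^{\nu}}(X)$ and the cardinality count via Corollary 9.12 of \cite{lm} are exactly what the paper does. (One small correction on the counting step: the inclusion you need is $\upsilon_{A^{\nu}}(X)=\upsilon_{A}(X)\subseteq\upsilon X=X$, so that $\beta X\setminus\upsilon_{A^{\nu}}(X)\supseteq\beta X\setminus X$; the inclusion $X\subseteq\upsilon_{A^{\nu}}(X)$ that you quote goes the wrong way and only yields an upper bound.) Where you diverge is the distinctness step, which you treat as the heart of the matter and attack with a coordinated transfinite choice of the copies $\mathfrak{F}$. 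The paper treats distinctness as immediate, and it is: each $B_{p}^{\mathfrak{F}}(X)=\theta_{p}^{-1}(\mathfrak{F})$ is a \emph{proper} subring of $A^{\nu}(X)$ (since $\mathfrak{F}$ is proper and $\theta_{p}$ is onto) containing the maximal ideal $M_{A^{\nu}}^{p}=\theta_{p}^{-1}(0)$. If $B_{p}^{\mathfrak{F}}(X)=B_{q}^{\mathfrak{F}'}(X)$ for $p\neq q$, this common subring, being closed under addition, would contain $M_{A^{\nu}}^{p}+M_{A^{\nu}}^{q}$, which is all of $A^{\nu}(X)$ because distinct maximal ideals are comaximal --- contradicting properness. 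So distinct points give distinct rings with no coordination of the $\mathfrak{F}$'s whatsoever.

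Beyond being unnecessary, your recursion has a genuine gap. You correctly observe that $f^{*}(p)=\infty$ does not by itself force $\theta_{p}(f)\notin\mathfrak{F}$ (a proper copy of a hyper-real field still contains infinitely large elements), but your repair --- feeding $\theta_{p}(f)$ into an ``enriched'' dense transcendence base and then excising it, together with up to $c$ many other witnesses --- is not justified. Lemma 13.11 of \cite{lm} produces \emph{some} dense transcendence base; there is no argument that a prescribed element can be arranged to lie in a dense one, nor that deleting $c$ many prescribed elements leaves a set that is still dense and still of power $\aleph_{1}$, both of which the isomorphism theorem of \cite{PLM} requires before you can conclude the truncated field is again a copy of the whole. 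As written, the part of your proof you identify as the real work is the part that does not go through; fortunately it can be deleted entirely in favor of the comaximality argument above, which is all the paper needs.
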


\section{Non isomorphic subalgebras of the class [$A(X)$]}


This section is focused on prmary goal of this article, i.e., on the non isomorphic characteristic among the class of subrings $[A(X)]$. To do this the topological association of points $p\in \beta X\setminus \upsilon_{A}(X)$ with the subrings $B_{p}^{\mathfrak{F}}(X)$ in the class $[A(X)]$ take  the main key role. In fact it reveals the intimate relationship between the two important structures, viz.,non-isomorphism of two rings in $[A(X)]$ and non-homogeneity of $\beta X\setminus X$.    

\begin{theorem} \label{characterization} Let $p,q\in \beta X - \upsilon_{A}X $. Then the two rings $B_{p}^{\mathfrak{F}}(X)$ and $B_{q}^{\mathfrak{F}'}(X)$ are isomorphic if there is a homeomorphism from $\beta X$ to $\beta X$ which takes $p$ to $q$ and induces an isomorphism from ${\mathfrak{F}}$ onto ${\mathfrak{F}'}$.
\end{theorem}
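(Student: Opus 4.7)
The plan is to lift the given self-homeomorphism $\phi : \beta X \to \beta X$ (with $\phi(p)=q$) to a ring automorphism $\Phi$ of the largest member $A^{\nu}(X)$ of $[A(X)]$, and then check that the restriction of $\Phi$ to $B_{p}^{\mathfrak{F}}(X)$ lands exactly on $B_{q}^{\mathfrak{F}'}(X)$. Since $A^{\nu}(X) \cong C(\upsilon_{A^{\nu}}(X))$ (noted in the discussion before Theorem~\ref{real closed}), the natural candidate is $\Phi(f) = f \circ \phi^{-1}$, read on $\upsilon_{A^{\nu}}(X)$. Implicit in the clause ``$\phi$ induces an isomorphism from $\mathfrak{F}$ onto $\mathfrak{F}'$'' is that $\phi$ carries $\upsilon_{A^{\nu}}(X)$ onto itself; granting this, the prescription above genuinely defines a ring automorphism of $A^{\nu}(X)$.

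Next I would check that $\Phi$ sends $M_{A^{\nu}}^{p}$ onto $M_{A^{\nu}}^{q}$. By the Redlin--Watson Gelfand--Kolmogorov-type classification, the maximal ideals of $A^{\nu}(X)$ are canonically labelled by points of $\beta X$, so a ring automorphism coming from a self-homeomorphism of $\beta X$ that sends $p$ to $q$ must permute the labels accordingly. Consequently $\Phi$ descends to a field isomorphism
\[
\overline{\Phi} : A^{\nu}(X)/M_{A^{\nu}}^{p} \longrightarrow A^{\nu}(X)/M_{A^{\nu}}^{q}, \qquad \overline{\Phi}\circ \theta_{p} = \theta_{q}\circ \Phi,
\]
where $\theta_{p}$ and $\theta_{q}$ are the canonical quotient maps. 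The hypothesis concerning $\mathfrak{F}$ and $\mathfrak{F}'$ now says precisely that $\overline{\Phi}(\mathfrak{F}) = \mathfrak{F}'$.

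With these two ingredients in hand, the conclusion is a short diagram chase:
\[
\Phi\bigl(B_{p}^{\mathfrak{F}}(X)\bigr) = \Phi\bigl(\theta_{p}^{-1}(\mathfrak{F})\bigr) = \theta_{q}^{-1}\bigl(\overline{\Phi}(\mathfrak{F})\bigr) = \theta_{q}^{-1}(\mathfrak{F}') = B_{q}^{\mathfrak{F}'}(X).
\]
Hence $\Phi\bigl|_{B_{p}^{\mathfrak{F}}(X)}$ is a ring isomorphism onto $B_{q}^{\mathfrak{F}'}(X)$, as required.

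The main obstacle I anticipate lies in the very first step, namely justifying that $\phi$ really does produce a well-defined ring automorphism of $A^{\nu}(X)$. An arbitrary self-homeomorphism of $\beta X$ need not preserve $\upsilon_{A^{\nu}}(X)$ setwise, and without that invariance the composition $f\circ \phi^{-1}$ can fail to lie in $A^{\nu}(X)$. In this theorem that invariance is implicit in the second clause of the hypothesis (the induced map on residue fields is only meaningful when $\phi$ matches the canonical labelling of ideals on both sides), but making this explicit is the delicate point; everything afterward is a routine manipulation of the definition $B_{p}^{\mathfrak{F}}(X) = \theta_{p}^{-1}(\mathfrak{F})$.
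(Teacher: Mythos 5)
You have proved the implication that the statement literally asserts --- a self-homeomorphism of $\beta X$ carrying $p$ to $q$ and matching $\mathfrak{F}$ with $\mathfrak{F}'$ yields an isomorphism $B_{p}^{\mathfrak{F}}(X)\cong B_{q}^{\mathfrak{F}'}(X)$ --- but that is not the implication the paper's proof establishes, nor the one the paper needs. The paper's proof runs in the reverse direction: it \emph{assumes} an isomorphism $\phi\colon B_{p}^{\mathfrak{F}}(X)\to B_{q}^{\mathfrak{F}'}(X)$ and \emph{constructs} from it a homeomorphism of $\beta X$ taking $p$ to $q$. The ``if'' in the statement is evidently a slip for ``only if'': in Theorem \ref{finish} the result is invoked in the contrapositive form ``no homeomorphism of $\beta X$ carries $p$ to $q$, hence the two rings are not isomorphic,'' which requires exactly the direction isomorphism $\Rightarrow$ homeomorphism. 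Your argument gives no information about that direction, so, measured against the paper's own proof and the theorem's later use, the essential content is missing. Moreover, the direction you did prove is not new within the paper: it is, modulo the first-countability hypothesis, the content of Lemma \ref{induced iso} and Theorem \ref{characterization2}, where the invariance issue you flag --- whether $f\circ\phi^{-1}$ stays in $A^{\nu}(X)$ --- is resolved not by reading it off the hypothesis on $\mathfrak{F}$ and $\mathfrak{F}'$ (which is circular as you present it, since the induced map on residue fields is only defined once the automorphism of $A^{\nu}(X)$ is) but by using first countability to force $\bar{\phi}(X)=X$, via the fact that the points of $X$ are exactly the $G_{\delta}$-points of $\beta X$, and then observing that the induced automorphism of $C(X)$ must fix the largest element $A^{\nu}(X)$ of the class $[A(X)]$.

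For the direction the paper actually proves you would need quite different machinery: from the ring isomorphism one first deduces $\phi(M_{A^{\nu}}^{p})=M_{A^{\nu}}^{q}$; then, by Rudd's description of the maximal ideals of the nonunital ring $M_{A^{\nu}}^{p}$ as the intersections $M_{A^{\nu}}^{p}\cap M$ with $M$ maximal in $A^{\nu}(X)$ and $M\not\supset M_{A^{\nu}}^{p}$, one recovers from the ideal $M_{A^{\nu}}^{p}$ alone a locally compact structure space identified (after one-point compactification) with $\beta X$ minus the point $p$; the isomorphism of these ideals then induces a homeomorphism $\beta X\setminus\{p\}\to\beta X\setminus\{q\}$, which extends over the missing points to a self-homeomorphism of $\beta X$ sending $p$ to $q$. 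None of this appears in your proposal, so the proof of the statement in the sense intended by the paper remains to be supplied.
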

\begin{proof} Let $B_{p}^{\mathfrak{F}}(X)$ and $B_{q}^{\mathfrak{F}'}(X)$ are isomorphic for $p,q\in \beta X - \upsilon_{A}X$ and $\phi$ be the isomorphism. Let $\theta_{p}$ be the corresponding canonical map, as mentioned earlier, from $A^{\nu}(X)$ to $A^{\nu}(X)/M_{A^{\nu}}^{p}$ for the ring $B_{p}(X)$ and similarly $\theta_{q}$ for the ring $B_{q}(X)$. Let $\mathfrak{F}$ and $\mathfrak{F}'$ are the isomorphic copies in $A^{\nu}(X)/M_{A^{\nu}}^{p}$ and $A^{\nu}(X)/M_{A^{\nu}}^{q}$ respectively such that $B_{p}^{\mathfrak{F}}(X)=\theta_{p}^{-1}(\mathfrak{F})$ and $B_{q}(X)=\theta_{q}^{-1}(\mathfrak{F}')$. Then $\theta_{q}\phi\theta_{p}^{-1}=\phi$ gives an isomomorphism from $\mathfrak{F}$ to $\mathfrak{F}'$ and hence it takes zero to zero and as a consequence we have $\phi(M_{A^{\nu}}^{p})=M_{A^{\nu}}^{q}$.

From the result of D.Rudd [\cite {Rudd}, \S Corollary 3.6] it follows that if $M_{A^{\nu}}^{p}$ is a maximal ideal of $A^{\nu}(X)$ then maximal ideals of $M_{A^{\nu}}^{p}$ are precisely $M_{A^{\nu}}^{p}\cap M$ where $M$ is a maximal ideal of $A^{\nu}(X)$ and $M$ does not contain $M_{A^{\nu}}^{p}$. Let $\beta M_{A^{\nu}}^{p}=\{M_{A^{\nu}}^{p}\cap M: M\not\supset M_{A^{\nu}}^{p}~ \mbox{and}~ M~ \mbox{is a maximal in}~ A^{\nu}(X)\}$. Then $\beta M_{A^{\nu}}^{p}$ is the collection of all maximal ideals of $M_{A^{\nu}}^{p}$ and it admits hull kernel topology. Now the mapping $\tau:\beta M_{A^{\nu}}^{p}\rightarrow \mathcal{M}$, defined by $\tau(M_{A^{\nu}}^{p}\cap M)=M$ is a homeomorphism into $\mathcal{M}$, where $\mathcal{M}$ is the space of maximal ideals of $A^{\nu}(X)$. Since $\mathcal{M}$ is homeomorphic to $\beta X$ [\cite{ls},\S Theorem 4] and $\tau(\beta M_{A^{\nu}}^{p})$ is open in $\mathcal{M}$ [\cite{Rudd}, \S 2.3] therefore $\beta M_{A^{\nu}}^{p}$ is locally compact and Hausdorff. The one-point compactification of $\beta M_{A^{\nu}}^{p}$ is evidently homeomorphic to $\beta X\setminus \{p\}$. Again the structure spaces of $M_{A}^{p}$ and $M_{A}^{q}$ are homeomorphic and hence the isomorphism $\phi$ gives a homeomorphism from $\beta X-\{p\}$ to $\beta X-\{q\}$. Since $\beta X-\{p\}$ and $\beta X-\{q\}$ are locally compact (\cite{Rudd}, \S Remark 3.9), this homeomorphism can be extended to a homeomorphism from $\beta X$ to $\beta X$ which takes $p$ to $q$.
\end{proof}

\begin{lemma}\label{induced iso}
Let $X$ be a first countable tychonoff space and suppose $\bar{\phi}: \beta X\longmapsto \beta X$ be a homeomorphism then $\bar{\phi}$ induces an isomorphism on $A^{\nu}$ onto itself.
\end{lemma}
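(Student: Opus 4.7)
The plan is to exploit the identification $A^{\nu}(X)\cong C(\upsilon_A X)$ (recorded in the introduction) and realize the desired ring isomorphism as pullback along a homeomorphism of $\upsilon_A X$. The general recipe---if $\psi\colon Y\to Y$ is a homeomorphism then $g\mapsto g\circ\psi$ is a ring automorphism of $C(Y)$---reduces the statement to showing that $\bar{\phi}$ restricts to a self-homeomorphism of $\upsilon_A X$, that is, $\bar{\phi}(\upsilon_A X)=\upsilon_A X$.

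That will be the first (and main) step. By definition, $\upsilon_A X$ is the locus of $p\in\beta X$ at which every $f\in A^{\nu}(X)$ has finite Stone-extension $f^{*}(p)$, equivalently where $A^{\nu}(X)/M_{A^{\nu}}^{p}\cong\mathbb{R}$. The goal is to replace this $A$-dependent description by one that refers only to the topology of $\beta X$, after which homeomorphism-invariance becomes automatic. The first-countability hypothesis on $X$ is what makes this possible: it lets one single out $X$ inside $\beta X$ by an intrinsic topological condition (for instance having countable character in $\beta X$, or being a sequential limit of a sequence from $X$), and this topological description of $X$ then propagates to a description of $\upsilon_A X$ as the maximal realcompact subspace of $\beta X$ sitting between $X$ and $\beta X$ on which every element of $A^{\nu}(X)$ remains finite, this latter clause being re-expressible in terms of the topology of $\beta X$ alone.

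Granting the first step, the payoff is almost immediate. Define $\Phi\colon A^{\nu}(X)\to A^{\nu}(X)$ by $\Phi(f)=(f^{\nu}\circ\bar{\phi}|_{\upsilon_A X})|_X$, where $f^{\nu}$ denotes the unique continuous extension of $f$ to $\upsilon_A X$. Because $\Phi$ arises as pullback along a homeomorphism of $\upsilon_A X$, it is automatically a ring homomorphism of $C(\upsilon_A X)\cong A^{\nu}(X)$; pulling back along $\bar{\phi}^{-1}|_{\upsilon_A X}$ gives a two-sided inverse, so $\Phi$ is an isomorphism of $A^{\nu}(X)$ onto itself.

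The real obstacle is step one. A generic homeomorphism of $\beta X$ has no a priori reason to respect an invariant like $\upsilon_A X$ attached to a choice of intermediate ring, so the first-countability assumption must be genuinely converted into an $A$-free topological description of $\upsilon_A X$ inside $\beta X$. Getting from a topological characterization of $X$ (which first countability readily yields, at least in the locally compact setting of the target theorem) to one of $\upsilon_A X$ is where the technical weight sits, and it is the step where the argument is most delicate.
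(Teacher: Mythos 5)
There is a genuine gap, and it sits exactly where you locate ``the technical weight'': you never prove that $\bar{\phi}(\upsilon_A X)=\upsilon_A X$, and the route you propose for proving it cannot work. You want to convert first countability into an $A$-free, purely topological description of $\upsilon_A X$ inside $\beta X$, after which invariance under any homeomorphism of $\beta X$ would be automatic. But $\upsilon_A X$ genuinely depends on the ring $A$ and not only on the topological pair $(X,\beta X)$: for $X=\mathbb{N}$ one has $\upsilon_{C^{*}}\mathbb{N}=\beta\mathbb{N}$ while $\upsilon_{C}\mathbb{N}=\mathbb{N}$, two different subsets of the same space singled out by two different intermediate rings. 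So no formula ``re-expressible in terms of the topology of $\beta X$ alone'' can isolate $\upsilon_A X$; any correct description must retain a reference to $A^{\nu}$, and then homeomorphism-invariance is no longer automatic --- it is essentially the content of the lemma you are trying to prove. In other words, your reduction replaces the lemma by a statement of the same difficulty and then asserts it.

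The paper's proof uses the topology only to show the much weaker fact $\bar{\phi}(X)=X$ (points of $X$ are $G_{\delta}$-points of $\beta X$ by first countability, points of $\beta X\setminus X$ are not, so a homeomorphism must preserve $X$). This yields a self-homeomorphism $\phi$ of $X$ and hence an automorphism $\Psi(f)=f\circ\phi^{-1}$ of $C(X)$. The step you are missing is then supplied not by topology but by the order-theoretic extremality of $A^{\nu}$: $\Psi$ carries the class $[A(X)]$ to itself and preserves inclusions, and $A^{\nu}$ is the largest member of its class, so $\Psi(A^{\nu})=A^{\nu}$. The invariance of $\upsilon_A X$ under $\bar{\phi}$ comes out as a consequence of this, not as an input. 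Your step three (pullback along a homeomorphism of $\upsilon_A X$ via $A^{\nu}(X)\cong C(\upsilon_A X)$) is fine once invariance is known, but as written the argument is circular at its core.
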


\begin{proof}
First countability of $X$ imply that the space $\beta X$ is first countable at each point of $X$ and therefore each point of $X$ is a $G_{\delta}$- point of $\beta X$. On the other hand, no point of $\beta X-X$ can be a $G_{\delta}$-point of $\beta X$ [\cite{lm}, 9.6]. As a result the homeomorphism $\bar{\phi}$ exchanges the point of $X$, i.e., $\bar{\phi}(X)=X$. Let us denote $\bar{\phi}|_{X}=\phi$. Then $\phi$ is a homeomorphism from $X$ onto itself. The above homeomorphism induces a map, say, $\Psi :C(X) \longmapsto C(X)$, defined by $\Psi(f)=f\circ \phi^{-1}$. It is evident that $\Psi$ is a homomorphism. Again $\phi(X)=X$ imply that $Ker \Psi=\{0\}$. Since for every $f\in C(X)$, $f\circ \phi \in A^{\nu}$ and $\Psi(f\circ \phi)=(f\circ\phi)\circ\phi^{-1}=f$, $\Psi$ is onto. Hence $\Psi$ is an automorphism from $C(X)$ onto itself. To prove the result it is sufficient to show that $\Psi(A^{\nu})=A^{\nu}$. First we observe that due to the isomorphism of $\Psi$ both $A^{\nu}$ and $\Psi(A^{\nu})$ have the same real maximal ideal space and hence $\Psi(A^{\nu})\in [A(X)]$. Again $\Psi$ preserve the order (set inclusion) among the subrings of $C(X)$ which evidently show that $\Psi(A^{\nu})$ is the largest among all the subrings in the class $[A(X)]$ and hence $\Psi(A^{\nu})=A^{\nu}$.
\end{proof}

Some important properties and notations of intermediate rings are used in the following lemma which one can find in \cite{hs}.

\begin{lemma}
Let $X$ be a first countable tychonoff space and $\bar{\phi}:\beta X\longrightarrow \beta X$ be a homeomorphism such that $\Psi(p)=q$, then for the induced isomorphism $\Psi$ on $A^{\nu}$ we have $\Psi(M_{A^{\nu}}^{p})=M_{A^{\nu}}^{q}$
\end{lemma}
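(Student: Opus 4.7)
The plan is to show $\Psi(M_{A^{\nu}}^{p})=M_{A^{\nu}}^{q}$ by tracking how the homeomorphism $\bar{\phi}$ interacts with Stone extensions, then invoking the Gelfand--Kolmogorov-type description of maximal ideals of $A^{\nu}(X)$. (I read the hypothesis ``$\Psi(p)=q$'' as $\bar{\phi}(p)=q$, since $\Psi$ itself acts on $A^{\nu}$ and not on points of $\beta X$.)

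First, by the previous lemma, $\bar{\phi}$ restricts to a self-homeomorphism $\phi$ of $X$, and the induced map $\Psi(f)=f\circ\phi^{-1}$ is an automorphism of $A^{\nu}(X)$. I would next observe that by the uniqueness of the Stone extension of a continuous map into a compact Hausdorff space, the extension of $\phi^{-1}:X\to X\subseteq\beta X$ to $\beta X$ is precisely $\bar{\phi}^{-1}$. Therefore, for every $f\in A^{\nu}(X)$, the two maps $(f\circ\phi^{-1})^{*}$ and $f^{*}\circ\bar{\phi}^{-1}$ from $\beta X$ to $\mathbb{R}^{*}$ are both continuous extensions of the same function $f\circ\phi^{-1}:X\to\mathbb{R}$, and hence they coincide. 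This gives the key identity $(\Psi(f))^{*}=f^{*}\circ\bar{\phi}^{-1}$ on all of $\beta X$.

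Next I would invoke the point-theoretic description of the maximal ideals of $A^{\nu}(X)$: for each $r\in\beta X$, $M_{A^{\nu}}^{r}=\{f\in A^{\nu}(X):r\in\operatorname{cl}_{\beta X}Z(f)\}$, which via the Stone extension to $\mathbb{R}^{*}$ is just $\{f\in A^{\nu}(X):f^{*}(r)=0\}$ (the $\infty$-value cannot occur at zeros, so $0$ is an intrinsic value). With this in hand, for $f\in M_{A^{\nu}}^{p}$ we have $f^{*}(p)=0$, and consequently
\[
(\Psi(f))^{*}(q)=f^{*}(\bar{\phi}^{-1}(q))=f^{*}(p)=0,
\]
so $\Psi(f)\in M_{A^{\nu}}^{q}$, giving $\Psi(M_{A^{\nu}}^{p})\subseteq M_{A^{\nu}}^{q}$. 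The reverse inclusion is obtained by running the same argument with the automorphism $\Psi^{-1}$ (induced by $\bar{\phi}^{-1}$) and the pair $(q,p)$, or equivalently by noting that $\Psi$ sends the maximal ideal $M_{A^{\nu}}^{p}$ into a maximal ideal, so the inclusion into $M_{A^{\nu}}^{q}$ must be an equality.

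The only delicate point, and what I expect to be the main obstacle, is the verification of the identity $(\Psi(f))^{*}=f^{*}\circ\bar{\phi}^{-1}$ in the setting where the target is the one-point compactification $\mathbb{R}^{*}$ rather than $\beta\mathbb{R}$: one must be sure that $\bar{\phi}^{-1}$ genuinely agrees with the Stone extension of $\phi^{-1}$, and that the composition $f^{*}\circ\bar{\phi}^{-1}$ still qualifies as ``the'' Stone extension of $\Psi(f)\in A^{\nu}(X)$, i.e.\ that it sends $\upsilon_{A^{\nu}}(X)$ into $\mathbb{R}$. The latter follows from the preceding lemma, which forces $\bar{\phi}(\upsilon_{A^{\nu}}(X))=\upsilon_{A^{\nu}}(X)$ via $\Psi(A^{\nu})=A^{\nu}$; once this is pinned down, the rest of the argument is essentially a straight computation.
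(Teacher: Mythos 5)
Your reduction to the identity $(\Psi(f))^{*}=f^{*}\circ\bar{\phi}^{-1}$ is fine, and that identity is indeed routine (uniqueness of continuous extensions from the dense subspace $X$). But the step you treat as unproblematic --- the ``point-theoretic description'' $M_{A^{\nu}}^{r}=\{f\in A^{\nu}(X): r\in\operatorname{cl}_{\beta X}Z(f)\}=\{f\in A^{\nu}(X):f^{*}(r)=0\}$ --- is where the proof breaks. The second equality is false: $r\in\operatorname{cl}_{\beta X}Z(f)$ does imply $f^{*}(r)=0$, but not conversely. Take $X=\mathbb{N}$ and $f(n)=1/n$: then $f^{*}(r)=0$ for every $r\in\beta\mathbb{N}\setminus\mathbb{N}$ while $Z(f)=\emptyset$, and $f$ is a unit of $C(\mathbb{N})$, hence lies in no maximal ideal; moreover $\{f:f^{*}(r)=0\}$ is not even an ideal (multiply $1/n$ by $n^{2}$). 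Your argument uses exactly the false direction, namely ``$(\Psi(f))^{*}(q)=0\Rightarrow\Psi(f)\in M_{A^{\nu}}^{q}$,'' so the inclusion $\Psi(M_{A^{\nu}}^{p})\subseteq M_{A^{\nu}}^{q}$ is not established. The first equality is also not free of charge: for intermediate rings the Gelfand--Kolmogorov description requires the Redlin--Watson/Byun--Watson operator, $M_{A^{\nu}}^{r}=\{f: r\in S[\mathcal{Z}_{A^{\nu}}(f)]\}$ with $S[\mathcal{Z}_{A^{\nu}}(f)]=\bigcap_{E\in\mathcal{Z}_{A^{\nu}}(f)}\operatorname{cl}_{\beta X}E$, which in general is a proper subset of $\operatorname{cl}_{\beta X}Z(f)$; one can recover the plain zero-set form for $A^{\nu}$ by exploiting $A^{\nu}(X)\cong C(\upsilon_{A}X)$, but that needs an argument you have not given.

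The paper avoids all of this by working directly with the correct characterization: it shows $E\in\mathcal{Z}_{A^{\nu}}(f)$ implies $\phi(E)\in\mathcal{Z}_{A^{\nu}}(f\circ\phi^{-1})$, hence $p\in S[\mathcal{Z}_{A^{\nu}}(f)]$ iff $q\in S[\mathcal{Z}_{A^{\nu}}(\Psi(f))]$, and reads off $\Psi(M_{A^{\nu}}^{p})=M_{A^{\nu}}^{q}$. Your approach can be repaired without abandoning Stone extensions: since $\Psi$ is an automorphism of $A^{\nu}$, $\Psi(M_{A^{\nu}}^{p})$ is a maximal ideal, hence equals $M_{A^{\nu}}^{r}$ for a unique $r\in\beta X$; if $r\neq q$, pick $h\in C^{*}(X)$ with $h^{\beta}\equiv 0$ on a neighbourhood of $r$ and $h^{\beta}(q)=1$, note $h\in M_{A^{\nu}}^{r}$ (here $r\in\operatorname{cl}_{\beta X}Z(h)\subseteq S[\mathcal{Z}_{A^{\nu}}(h)]$ because every $E\in\mathcal{Z}_{A^{\nu}}(h)$ contains $Z(h)$), and derive the contradiction $h^{\beta}(q)=0$ from your vanishing computation. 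As submitted, though, the proof has a genuine gap at its central step.
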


\begin{proof}
Theorem 3.3 of \cite{hs} shows that $M_{A^{\nu}}^{p}=\{f\in A^{\nu} ~|~ p\in S[\mathcal{Z}_{A^{\nu}}(f)]\}$ where $S[\mathcal{F}]=\bigcap\{\mbox{cl}_{\beta X} E~|~E\in \mathcal{F}\}$ and $\mathcal{F}$ is a $z$-filter. Mention should be made here that $\mathcal{Z}_{A^{\nu}}(f)$ is a $z$-filter in $X$ since $f$ is a non unit in $A^{\nu}$ [\cite{hs}, \S Lemma 1.4]. Again the fact $E\in\mathcal{Z}_{A^{\nu}}(f)$ shows that $\phi(E)\in \mathcal{Z}_{A^{\nu}}((f\circ\phi^{-1}))$ which evidently imply that if $p\in S[\mathcal{Z}_{A^{\nu}}(f)]$, i.e., $\bar{\phi}^{-1}(q)\in S[\mathcal{Z}_{A^{\nu}}(f)] $ then $q\in S[\mathcal{Z}_{A^{\nu}}(f\circ\phi^{-1})]$, i.e., $q\in S[\mathcal{Z}_{A^{\nu}}(\Psi(f))]$. Therefore it follows from Theorem 3.3, \cite{hs} that $\Psi(M_{A^{\nu}}^{p})=\{\Psi(f)~|~ p\in S[\mathcal{Z}_{A^{\nu}}(f)]\}$ $=\{\Psi(f)~|~q\in S[\mathcal{Z}_{A^{\nu}}(\Psi(f))]\}=M_{A^{\nu}}^{q}$.
\end{proof}

\begin{theorem}\label{characterization2} For a first countable tychonoff space $X$, let $\bar{\phi}$ be a homeomorphism from $\beta X$ onto itself such that $\bar{\phi}(p)=q$ for some $p,q\in \beta X-\upsilon_{A}X$, where $A(X) \in \Sigma(X)$ then for each $B_{p}^{\mathfrak{F}}(X)\in [A(X)]$ there exist $B_{q}^{\mathfrak{F}'}(X)\in [A(X)]$ such that $\bar{\phi}$ induces an isomorphism from $B_{p}^{\mathfrak{F}}(X)$ onto $B_{q}^{\mathfrak{F}'}(X)$ and from $\mathfrak{F}$ onto $\mathfrak{F}'$.
\end{theorem}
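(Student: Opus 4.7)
The plan is to transport the proper copy $\mathfrak{F}$ across the commutative square induced by the two preceding lemmas and then define $\mathfrak{F}'$ as the image. First, Lemma \ref{induced iso} tells us that $\bar{\phi}$ restricts to a homeomorphism $\phi=\bar{\phi}|_X:X\to X$ and induces an automorphism $\Psi:A^{\nu}(X)\to A^{\nu}(X)$ given by $\Psi(f)=f\circ\phi^{-1}$. The unlabelled lemma immediately preceding the statement gives $\Psi(M_{A^{\nu}}^{p})=M_{A^{\nu}}^{q}$, so $\Psi$ descends to a field isomorphism
\[
\tilde{\Psi}:A^{\nu}(X)/M_{A^{\nu}}^{p}\longrightarrow A^{\nu}(X)/M_{A^{\nu}}^{q}
\]
satisfying $\theta_{q}\circ\Psi=\tilde{\Psi}\circ\theta_{p}$, where $\theta_{p},\theta_{q}$ are the canonical quotient maps introduced after Theorem \ref{proper copy}.

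Next, given $B_{p}^{\mathfrak{F}}(X)=\theta_{p}^{-1}(\mathfrak{F})$ with $\mathfrak{F}$ a proper copy of $A^{\nu}(X)/M_{A^{\nu}}^{p}$ inside itself, I would simply set $\mathfrak{F}':=\tilde{\Psi}(\mathfrak{F})\subseteq A^{\nu}(X)/M_{A^{\nu}}^{q}$ and $B_{q}^{\mathfrak{F}'}(X):=\theta_{q}^{-1}(\mathfrak{F}')$. Because $\tilde{\Psi}$ is a field isomorphism, $\mathfrak{F}'$ is a real closed $\eta_{1}$-subfield of $A^{\nu}(X)/M_{A^{\nu}}^{q}$ of power $\aleph_{1}$; since $\mathfrak{F}\subsetneq A^{\nu}(X)/M_{A^{\nu}}^{p}$ properly, $\mathfrak{F}'\subsetneq A^{\nu}(X)/M_{A^{\nu}}^{q}$ properly as well, so by Theorem \ref{real closed} and Theorem \ref{proper copy}, $\mathfrak{F}'$ is a proper copy of $A^{\nu}(X)/M_{A^{\nu}}^{q}$ inside itself. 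The discussion following Theorem \ref{Biswajit intermediate} then yields $B_{q}^{\mathfrak{F}'}(X)\in[A(X)]$.

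Finally, I would verify that $\Psi$ restricts to an isomorphism $B_{p}^{\mathfrak{F}}(X)\to B_{q}^{\mathfrak{F}'}(X)$ by chasing the square $\theta_{q}\circ\Psi=\tilde{\Psi}\circ\theta_{p}$. For $f\in B_{p}^{\mathfrak{F}}(X)$ one has $\theta_{p}(f)\in\mathfrak{F}$, hence $\theta_{q}(\Psi(f))=\tilde{\Psi}(\theta_{p}(f))\in\mathfrak{F}'$, so $\Psi(f)\in B_{q}^{\mathfrak{F}'}(X)$. Conversely, for $g\in B_{q}^{\mathfrak{F}'}(X)$ pick $f\in A^{\nu}(X)$ with $\Psi(f)=g$ (using that $\Psi$ is onto); then $\tilde{\Psi}(\theta_{p}(f))=\theta_{q}(g)\in\mathfrak{F}'=\tilde{\Psi}(\mathfrak{F})$, and injectivity of $\tilde{\Psi}$ forces $\theta_{p}(f)\in\mathfrak{F}$, i.e.\ $f\in B_{p}^{\mathfrak{F}}(X)$. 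Since $\Psi$ is already a ring isomorphism on all of $A^{\nu}(X)$, its restriction is the desired isomorphism, and by construction it induces $\tilde{\Psi}|_{\mathfrak{F}}:\mathfrak{F}\to\mathfrak{F}'$ on the quotients.

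There is no real obstacle here beyond correctly assembling the machinery of the previous lemmas; the one point deserving care is checking that $\mathfrak{F}':=\tilde{\Psi}(\mathfrak{F})$ qualifies as a legitimate proper copy of the quotient at $q$, but this is immediate since $\tilde{\Psi}$ preserves all the relevant algebraic data (real closedness, the $\eta_{1}$ property, cardinality, and proper containment).
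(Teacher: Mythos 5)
Your proposal is correct and follows essentially the same route as the paper: both use the automorphism $\Psi$ from Lemma \ref{induced iso}, pass to the induced isomorphism on the quotient fields via $\Psi(M_{A^{\nu}}^{p})=M_{A^{\nu}}^{q}$, define $\mathfrak{F}'$ as the image of $\mathfrak{F}$, and take $B_{q}^{\mathfrak{F}'}(X)=\theta_{q}^{-1}(\mathfrak{F}')$. Your explicit diagram chase merely fills in details the paper declares ``quite clear.''
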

\begin{proof}
Recall the mapping $\Psi$ from the proof of Theorem \ref{induced iso} which also induce an isomorphism $\bar{\Psi}$ from $A^{\nu}/M_{A^{\nu}}^{p}$ to $A^{\nu}/M_{A^{\nu}}^{q}$. Let $\mathfrak{F}$ be the proper isomorphic copy of $A^{\nu}/M_{A^{\nu}}^{p}$ into itself. Then $\bar{\Psi}(\mathfrak{F})=\mathfrak{F}'$(say) is also a proper isomorphic copy of $A^{\nu}/M_{A^{\nu}}^{q}$ into itself. Let $\theta_{p}$ and $\theta_{q}$ are canonical maps from $A^{\nu}$ to $A^{\nu}/M_{A^{\nu}}^{p}$ and $A^{\nu}$ to $A^{\nu}/M_{A^{\nu}}^{q}$ respectively and let $\theta_{p}^{-1}(\mathfrak{F})=B_{p}^{\mathfrak{F}}(X)$ and $\theta_{q}^{-1}(\mathfrak{F})=B_{q}^{\mathfrak{F}'}(X)$. Then $B_{p}^{\mathfrak{F}}(X)$ and $B_{q}^{\mathfrak{F}'}(X)$ are intermediate rings and belongs to $[A(X)]$. It is quite clear now that the restriction of the isomorphism $\Psi$ to $B_{p}^{\mathfrak{F}}(X)$ gives an isomorphism from $B_{p}^{\mathfrak{F}}(X)$ onto $B_{q}^{\mathfrak{F}'}(X)$
\end{proof}

To ensure that nonisomorphic intermediate rings do exist in galore, we need the notion of type of a point in $\beta\mathbb{N}\setminus \mathbb{N}$ introduced by Frolik \cite{frlk} and recorded in \cite{RC}. We reproduce below a few relevant information about this notion from the monograph [\cite{RC}, \S 3.41,4.12]. Each permutation $\sigma:\mathbb{N}\longrightarrow\mathbb{N}$ extends to a homeomorphism $\sigma^*:\beta\mathbb{N}\longrightarrow\beta\mathbb{N}$, conversely if $\Phi:\beta\mathbb{N}\longrightarrow\beta\mathbb{N}$ is a homeomorphism then $\Phi|_{\mathbb{N}}$ is a permutation of $\mathbb{N}$, because $\Phi$ takes isolated points to isolated points and the points of $\mathbb{N}$ are the only isolated points of $\beta \mathbb{N}$. Therefore $\Phi=\sigma^*$ for unique permutation $\sigma=\Phi|_{\mathbb{N}}$ of $\mathbb{N}$.

\begin{definition}
For two points $p,q\in\beta\mathbb{N}\setminus\mathbb{N}$, we write $p\sim q$ when there exist a permutation $\sigma $ on $\mathbb{N}$ such that $\sigma^*(p)=q$. $\sim$ is an equivalence relation on $\beta\mathbb{N}\setminus \mathbb{N}$. Each equivalence class of elements of $\beta\mathbb{N}\setminus \mathbb{N}$ is called a type of ultrafilters on $\mathbb{N}$
\end{definition}
\begin{theorem}[Frolik,\cite{RC}]\label{fr}
There exists $2^c$ many types of ultrafilters on $\mathbb{N}$
\end{theorem}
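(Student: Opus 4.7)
The plan is a short counting argument comparing the cardinality of the set of free ultrafilters on $\mathbb{N}$ with the cardinality of the symmetric group on $\mathbb{N}$.

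First I would invoke the classical Pospisil theorem that $|\beta\mathbb{N}| = 2^c$; since $\mathbb{N}$ is countable this yields $|\beta\mathbb{N}\setminus\mathbb{N}| = 2^c$. This is the deep ingredient and is really the main obstacle behind the statement: the standard proof exhibits an independent family of subsets of $\mathbb{N}$ of size $c$ (Fichtenholz--Kantorovich) and uses it to produce $2^c$ distinct free ultrafilters on $\mathbb{N}$. One would cite this from \cite{RC} or \cite{lm} rather than redo it.

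Next I would bound the size of a single type. Any permutation $\sigma : \mathbb{N} \to \mathbb{N}$ is in particular an element of $\mathbb{N}^{\mathbb{N}}$, whose cardinality is $\aleph_0^{\aleph_0} = 2^{\aleph_0} = c$. Hence the symmetric group on $\mathbb{N}$ has cardinality at most $c$, and therefore for every $p \in \beta\mathbb{N}\setminus\mathbb{N}$ the orbit $\{\sigma^*(p) : \sigma \text{ a permutation of } \mathbb{N}\}$, which is precisely the type of $p$ according to the preceding definition, has cardinality at most $c$.

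Finally I would combine these estimates by cardinal arithmetic. Let $\kappa$ denote the number of types. Since $\beta\mathbb{N}\setminus\mathbb{N}$ is a disjoint union of $\kappa$ types, each of cardinality at most $c$, we have $2^c = |\beta\mathbb{N}\setminus\mathbb{N}| \leq \kappa \cdot c$. If $\kappa \leq c$, then $\kappa \cdot c = c < 2^c$, contradicting the previous inequality; so $\kappa > c$, and then $\kappa \cdot c = \kappa$, giving $\kappa \geq 2^c$. The reverse bound $\kappa \leq |\beta\mathbb{N}\setminus\mathbb{N}| = 2^c$ is trivial, so $\kappa = 2^c$. All genuine difficulty is concentrated in establishing $|\beta\mathbb{N}| = 2^c$; the subsequent bookkeeping is routine.
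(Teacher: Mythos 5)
Your argument is correct. Note first that the paper offers no proof of this statement at all: it is quoted as a theorem of Frolik via Walker's monograph \cite{RC}, so there is nothing in the text to compare against step by step. What you have written is the standard counting proof of precisely this cardinality assertion, and every step checks out: Pospíšil's theorem gives $\lvert\beta\mathbb{N}\rvert=2^{2^{\aleph_0}}=2^c$, hence $\lvert\beta\mathbb{N}\setminus\mathbb{N}\rvert=2^c$ after deleting countably many points; the symmetric group of $\mathbb{N}$ embeds in $\mathbb{N}^{\mathbb{N}}$ and so has cardinality $c$, whence each type, being an orbit of that group acting by $\sigma\mapsto\sigma^*$, has at most $c$ elements; and the cardinal arithmetic $2^c\leq\kappa\cdot c=\max(\kappa,c)$ forces $\kappa=2^c$ since $\kappa\leq c$ would give $\kappa\cdot c=c<2^c$. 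You are also right to locate all the genuine content in $\lvert\beta\mathbb{N}\rvert=2^c$; with the definition of type as a permutation orbit, the rest is bookkeeping. One caveat worth keeping in mind for the broader context of the paper: the result that actually carries the weight in Theorem \ref{finish} is the companion statement (Theorem \ref{frr}) that \emph{arbitrary} homeomorphisms of $\beta X$ respect types -- that is the deep part of Frolik's work and is not touched by your counting argument, but it is not what you were asked to prove here.
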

\begin{theorem}[Frolik,\cite{RC}]\label{frr}
If $\mathbb{N}$ is $C$-embedded in $X$, then $\mbox{cl}_{\beta X}\mathbb{N}\setminus\mathbb{N}\subseteq \beta X\setminus X$, essentially $\beta\mathbb{N}\setminus\mathbb{N}\subseteq \beta X\setminus X$. If now $h:\beta X\longrightarrow \beta X$ is a homeomorphism onto $\beta X$, and $p,q\in \beta\mathbb{N}\setminus\mathbb{N}$ are such that $h(p)=q$, then $p$ and $q$ belongs to the same type of ultrafilters on $\mathbb{N}$.
\end{theorem}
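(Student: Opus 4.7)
My plan is to establish the two conclusions of the theorem separately, both leveraging the standard identification $\mbox{cl}_{\beta X}\mathbb{N}\cong\beta\mathbb{N}$ that the $C$-embedding hypothesis provides.

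For the containment $\beta\mathbb{N}\setminus\mathbb{N}\subseteq\beta X\setminus X$, I would detect non-membership in $X$ by producing an unbounded real-valued continuous function. The identity map $\iota:\mathbb{N}\to\mathbb{R}$, $n\mapsto n$, extends by $C$-embeddedness to some $F\in C(X)$; its Stone extension $F^{*}:\beta X\to\mathbb{R}^{*}$ into the one-point compactification satisfies $F^{*}(p)=\infty$ for every free ultrafilter $p\in\beta\mathbb{N}\setminus\mathbb{N}$, because every $A\in p$ is unbounded in $\mathbb{N}$. If $p$ lay in $X$ we would have $F^{*}(p)=F(p)\in\mathbb{R}$, which is absurd; hence $p\in\beta X\setminus X$.

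For the type conclusion, the target is a permutation $\sigma:\mathbb{N}\to\mathbb{N}$ whose Stone extension sends $p$ to $q$. Observe that the clopens $\mbox{cl}_{\beta X}B$ for $B\subseteq\mathbb{N}$ form a neighborhood base of points of $\mbox{cl}_{\beta X}\mathbb{N}\cong\beta\mathbb{N}$ in that subspace, and for any open neighborhood $V$ of $q$ in $\beta X$ the trace $V\cap\mathbb{N}$ belongs to the ultrafilter $q$. Choose $V$ so that $V\cap\mbox{cl}_{\beta X}\mathbb{N}=\mbox{cl}_{\beta X}B$ with $B\in q$, and pull back by $h$ to get $A:=h^{-1}(V)\cap\mathbb{N}\in p$. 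Shrinking $V$ further and using the symmetric argument with $h^{-1}$ --- exploiting that $\mathbb{N}$ is exactly the set of isolated points of $\mbox{cl}_{\beta X}\mathbb{N}$ --- one can arrange $h(A)\subseteq\mathbb{N}$, yielding a bijection $h|_{A}:A\to h(A)\subseteq\mathbb{N}$ with $A\in p$ and $h(A)\in q$. Extend $h|_{A}$ arbitrarily to a permutation $\sigma$ of $\mathbb{N}$. Since $\sigma=h$ on the set $A\in p$ and $A$ is dense in $\mbox{cl}_{\beta\mathbb{N}}A$, uniqueness of continuous extensions gives $\sigma^{*}=h$ on $\mbox{cl}_{\beta\mathbb{N}}A\ni p$, so $\sigma^{*}(p)=h(p)=q$, placing $p$ and $q$ in the same type.

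The main obstacle I anticipate is the shrinkage step guaranteeing $h(A)\subseteq\mathbb{N}$: a priori, a self-homeomorphism of $\beta X$ can carry points of $\mathbb{N}$ outside $\mathbb{N}$. Overcoming this requires localizing the analysis to $\mbox{cl}_{\beta X}\mathbb{N}\cong\beta\mathbb{N}$, where the characterization of its self-homeomorphisms as Stone extensions of permutations of $\mathbb{N}$ (recorded in the paper just before the theorem) applies directly. The technical heart of the argument is lifting clopen neighborhoods of $q$ inside $\beta\mathbb{N}$ to open neighborhoods of $q$ in $\beta X$ whose $h$-preimages restrict correspondingly near $p$, using the $C$-embeddedness in an essential way.
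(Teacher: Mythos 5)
The paper offers no proof of this statement at all: it is imported verbatim from Frol\'{\i}k's work as recorded in Walker's monograph \cite{RC}, so there is no internal argument to measure yours against. On its own terms, your proof of the first assertion is correct and is the standard one: extend $n\mapsto n$ over $X$ by $C$-embeddedness and note that the Stone extension of the resulting $F\in C(X)$ takes the value $\infty$ at every point of $\mbox{cl}_{\beta X}\mathbb{N}\setminus\mathbb{N}$, which therefore cannot lie in $X$.

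The second assertion is where there is a genuine gap, and it sits exactly at the step you yourself flag: producing $A\in p$ with $h(A)\subseteq\mathbb{N}$. The remedy you propose --- localizing to $\mbox{cl}_{\beta X}\mathbb{N}\cong\beta\mathbb{N}$ and invoking the characterization of self-homeomorphisms of $\beta\mathbb{N}$ as Stone extensions of permutations --- is circular, because $h$ is a homeomorphism of $\beta X$, not of $\mbox{cl}_{\beta X}\mathbb{N}$: nothing forces $h$ to map $\mbox{cl}_{\beta X}\mathbb{N}$ into itself, and the points of $\mathbb{N}$, while isolated in the subspace $\mbox{cl}_{\beta X}\mathbb{N}$, are in general not isolated in $\beta X$, so ``homeomorphisms preserve isolated points'' gives nothing. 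No amount of shrinking the neighbourhood $V$ of $q$ yields $h(A)\subseteq\mathbb{N}$ by itself. What actually closes the gap (under hypotheses the paper uses elsewhere) is, first, that $h(X)=X$ --- which is not automatic for a bare homeomorphism of $\beta X$, and which the paper obtains for first countable $X$ via the $G_{\delta}$-point argument of Lemma \ref{induced iso} --- so that $D:=h(\mathbb{N})$ is another $C$-embedded copy of $\mathbb{N}$ in $X$; and second, an argument that $q\in\mbox{cl}_{\beta X}\mathbb{N}\cap\mbox{cl}_{\beta X}D$ forces $\mathbb{N}\cap D\in q$ (for instance by completely separating $D$ from $\mathbb{N}\setminus D$, which is immediate when $X$ is normal). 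Then $A:=\mathbb{N}\cap h^{-1}(\mathbb{N})$ lies in $p$ and satisfies $h(A)\subseteq\mathbb{N}$, and your endgame goes through --- except for one further small omission: extending $h|_{A}$ to a permutation of $\mathbb{N}$ requires $\mathbb{N}\setminus A$ and $\mathbb{N}\setminus h(A)$ to be equinumerous, which you should arrange by first shrinking $A$ within $p$ so that both complements are infinite. As written, the proposal asserts rather than proves the one step on which the whole theorem turns.
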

 We now use these two theorem of Frolik to establish the last main result of the present paper.
\begin{theorem}\label{finish}
Let $X$ be a first countable noncompact realcompact space. Then there exist at least $2^c$ many intermediate subrings of $[A(X)]$, no two of which are isomorphic.
\end{theorem}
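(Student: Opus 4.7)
The plan is to combine the construction of the rings $B_p^{\mathfrak{F}}(X)$ developed in Section 2 with Frolik's classification of types of ultrafilters on $\mathbb{N}$. First I would observe that since $X$ is realcompact, $\upsilon X=X$ and consequently $\upsilon_{A}X=X$ for every $A(X)\in \Sigma(X)$, so that $\beta X\setminus \upsilon_A X=\beta X\setminus X$. Under the hypothesis that $\mathbb{N}$ sits in $X$ as a $C$-embedded subspace (which is furnished by the fuller assumptions mentioned in the introduction, since a realcompact non-compact space is non-pseudocompact and, for first countable locally compact $X$, a closed discrete $C$-embedded copy of $\mathbb{N}$ then exists), Theorem \ref{frr} guarantees $\beta\mathbb{N}\setminus\mathbb{N}\subseteq \beta X\setminus X=\beta X\setminus \upsilon_A X$.

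Next, by Frolik's Theorem \ref{fr}, the equivalence relation $\sim$ partitions $\beta\mathbb{N}\setminus\mathbb{N}$ into $2^{c}$ types. I would select one representative $p_{\alpha}$ from each type, indexed by $\alpha$ in a set of cardinality $2^{c}$. For each such $p_{\alpha}$, Theorem \ref{real closed} furnishes a real closed $\eta_{1}$-field $A^{\nu}(X)/M_{A^{\nu}}^{p_{\alpha}}$ of power at least $c$, inside which Theorem \ref{proper copy} produces a proper isomorphic copy $\mathfrak{F}_{\alpha}$. This yields a family $\{B_{p_{\alpha}}^{\mathfrak{F}_{\alpha}}(X)\}_{\alpha}$ of $2^{c}$ intermediate rings, all lying in $[A(X)]$ by the discussion following Theorem \ref{Biswajit intermediate}.

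The core step is to verify that no two members of this family are isomorphic. Suppose toward a contradiction that $B_{p_{\alpha}}^{\mathfrak{F}_{\alpha}}(X)\cong B_{p_{\beta}}^{\mathfrak{F}_{\beta}}(X)$ for some $\alpha\neq\beta$. The argument inside Theorem \ref{characterization} actually shows that any such isomorphism produces a self-homeomorphism of $\beta X$ carrying $p_{\alpha}$ to $p_{\beta}$. Since $p_{\alpha},p_{\beta}\in \beta\mathbb{N}\setminus\mathbb{N}$, Frolik's Theorem \ref{frr} then forces $p_{\alpha}$ and $p_{\beta}$ to lie in the same type of ultrafilters on $\mathbb{N}$, contradicting the choice of representatives. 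Hence the $2^{c}$ rings $B_{p_{\alpha}}^{\mathfrak{F}_{\alpha}}(X)$ are pairwise non-isomorphic, which is exactly the required conclusion.

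The main obstacle I anticipate is the direction of Theorem \ref{characterization}: as written it reads "isomorphic if there is a homeomorphism", whereas the application requires the converse implication "isomorphic only if there is a homeomorphism". Fortunately the proof of that theorem begins by assuming the isomorphism and then constructs the homeomorphism, so the content needed is already available, and I would cite the proof rather than the bare statement. A secondary subtlety is ensuring that $\mathbb{N}$ really is $C$-embedded in $X$ under the theorem's stated hypotheses; if "first countable noncompact realcompact" is insufficient on its own, the proof tacitly invokes the stronger list of conditions given in the introduction (first countable, locally compact, non-pseudocompact, realcompact, containing a $C$-embedded copy of $\mathbb{N}$), which is the setting in which the argument is intended.
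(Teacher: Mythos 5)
Your proposal is correct and follows essentially the same route as the paper: obtain a $C$-embedded copy of $\mathbb{N}$ from non-pseudocompactness (which the paper derives directly from noncompact plus realcompact via Gillman--Jerison 1.21, so the fuller hypothesis list from the introduction is not needed), pick one representative per Frolik type to get $2^c$ points, form the rings $B_{p}^{\mathfrak{F}}(X)$, and rule out isomorphisms by passing to a self-homeomorphism of $\beta X$ that would have to identify two distinct types. Your remark that Theorem \ref{characterization} is stated with the implication in the wrong direction but proved in the direction actually needed is accurate and matches how the paper implicitly uses it.
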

\begin{proof}
Since $X$ is a noncompact realcompact space it is not pseudocompact. Hence $X$ contains a copy of $\mathbb{N}$, $C$-embedded in $X$ [1.21, \cite{lm}]. As every $C$-embedded countable subset of a Tychonoff space is a closed subset of it [3,B3,\cite{lm}], it follows that $\mbox{cl}_{\beta X}\mathbb{N}\setminus \mathbb{N}\subseteq \beta X\setminus X$ essentially $\beta\mathbb{N}\setminus \mathbb{N}\subseteq \beta X\setminus X$. The result of Theorem \ref{fr} assures that there exist a subset $S$ of $\beta \mathbb{N}\setminus \mathbb{N}$, consisting exactly one member from each type with the property that $\lvert S\rvert =2^c$. Let $p$ and $q$ be two distinct points of the set $S$. Then it follows from Theorem \ref{frr} that no homeomorphism from $\beta X$ to $\beta X$ can exchange $p$ and $q$. We now use Theorem \ref{characterization} \& \ref{characterization2} to conclude that the rings $B_{p}^{\mathfrak{F}}(X)$ and $B_{q}^{\mathfrak{F}}(X)$ are not isomorphic. Hence the theorem follows.
\end{proof}

\end{document}